\newtheorem{Th}{Theorem}[section]
\theoremstyle{definition}
\newtheorem{Def}[Th]{Definition}
\newtheorem{theo}{Theorem}
\theoremstyle{definition}
\begin{document}
\title{ Bipartite Powers of Certain Classes of Bipartite Graphs}
\def\correspondingauthor{\footnote{Corresponding author}}
\author{  Indrajit Paul, Ashok Kumar Das\correspondingauthor{}\\Department of Pure Mathematics, University of Calcutta\\
Email Address -  
paulindrajit199822@gmail.com \&\\ ashokdas.cu@gmail.com}

\maketitle

\begin{abstract}
The concept of graph powers has been extensively studied in graph theory. Analogous to graph powers, Chandran et al. \cite{2} introduced the notion of bipartite powers for bipartite graphs. In this paper, we show that the class of interval bigraphs, as well as the class of proper interval bigraphs are closed under the operation of taking bipartite powers. Finally, we define strongly closed property for bipartite graphs under powers and have shown that the class of chordal bipartite graphs is strongly closed under bipartite powers.

\end{abstract}
\noindent {\bf Keywords:}
Bipartite power, Interval bigraph, Proper interval bigraph, Monotone consecutive arrangement, strongly closed.
\section{Introduction}
\indent\hspace{0.3in} Consider a simple, finite, and connected graph $G = (V, E)$. For any pair of vertices $u$ and $v$ in $G$, let $d_G(u, v)$ denote the distance between $u$ and $v$ in $G$, which is the length of the shortest path from $u$ to $v$ in $G$. For a positive integer $k$, the $k$th power of $G$, denoted as $G^k$, is a graph with the same vertex set as $G$. In $G^k$, two vertices $u$ and $v$ are adjacent if and only if $d_G(u, v)\leq k$. A graph class $\mathcal{C}$ is said to be \textit{closed under powers} if $G \in\mathcal{C}$  implies that $G^k \in \mathcal{C}$ for any positive integer $k$. Also a graph class $\mathcal{C}$ is said to be \textit{strongly closed under powers} if for $k\in \mathbf{N}$, if $G^k\in \mathcal{C}$ then $G^{k+1}\in \mathcal{C}$.
\par Powers of graphs have been studied in different aspects. Motwani and Sudan\cite{mot-su}, and Brandst$\ddot{a}$dt et al.\cite{bran-drag-xian-yan}, Bandelt et al.\cite{band-hen-nic} respectively studied power graphs from algorithmic and other aspects. But it is interesting to the researchers to investigate which classes of graphs are closed and strongly closed under powers. Raychaudhuri \cite{Raychaudhuri-1, Raychaudhuri-2}, Flotow \cite{flotow} and others have demonstrated that several classes of graphs, like interval graphs, circular arc graphs, chordal graphs, comparability graphs are closed under powers. Raychaudhuri and Flotow also proved that interval graphs, proper interval graphs, trapezoid graphs are strongly closed under powers. In a recent paper Das and Paul \cite{16} demonstrated that circular arc graphs and proper circular arc graphs are strongly closed under powers.\\

Now, let’s turn our attention to bipartite graphs, often referred to as bigraphs, denoted as $B = (X, Y, E)$, where the vertex set of $B$ is partitioned into two stable sets, $X$ and $Y$. In a bigraph, one endpoint of an edge belongs to the $X$ partite set, while the other belongs to the $Y$ partite set.
 
 \par It is natural to extend the notion of graph powers for the class of bipartite graphs.\\
 It is well known that bipartite graphs do not contain any odd cycle. But the even powers of a bigraph contains odd cycles hence they are not bipartite graphs. Thus, when we define $B^k$ for a bigraph $B$, $k$ must be odd.\\

Considering this fact in mind,  Chandran et al.\cite{2} introduced the concept of bipartite power for the bigraphs. For a bipartite graph $B$, the bipartite power graph $B^{[k]}$ is defined as follows: it shares the same vertex set as $B$, and two vertices in $B^{[k]}$ are adjacent if their distance in $B$ is an odd natural number and less than or equal to $k$. They demonstrated that for any tree $T$, $T^{[k]}$ is a chordal bipartite graph when $k$ is odd. Chandran and Mathew \cite{1} further strengthened this result, proving that the class of chordal bipartite graphs is indeed closed under powers.\\

 Okamoto et al.\cite{10} established that interval bigraphs and bipartite permutation graphs also exhibit closure properties under bipartite powers. It is known that the class of bipartite permutation graphs is same as the class of proper interval bigraphs \cite{g.s}. Thus the result of Okamoto et al.\cite{10} shows the closure property of proper interval graphs under power. However, the way they defined intervals for the bigraph $B^{[k]}$ is not well defined and requires some modification. In this paper, we modify the construction of intervals for the bigraph $B^{[k]}$ and prove that interval bigraphs are closed under bipartite powers.

 \par A $(0,1)$-matrix is said to have monotone consecutive arrangement (MCA) if there exist independent row and column permutations  exhibiting the structure that 0’s in the resulting matrix can be labeled R or C such that every position above and to the right of an R is also labeled R, and every position below and to the left of a C is also labeled C.\\
 A bigraph is a proper interval bigraph if and only if its biadjacency matrix has a MCA. Using this, in the present paper  we shall give a new and elegant proof that proper interval bigraphs are closed under bipartite powers. \\
 Analogous to the strongly closed property of a graph class under powers, we define a class of bigraphs $\mathcal{C}$ is \textit{strongly closed} under powers if for any odd positive integer $k$, if $G^{[k]}\in \mathcal{C}$ then $G^{[k+2]}\in \mathcal{C}$. \\
 Finally, we give a simple proof that chordal bipartite graphs are strongly closed under bipartite powers, which is the main result of this paper.

 \section{Preliminaries}

\indent\hspace{0.3in}
  A bigraph $B = (X, Y, E)$ is said to be an interval bigraph if there exists a one-to-one correspondence between the vertex set $X \cup Y$ of $B$ and a collection of intervals $\{I_v : v \in X \cup Y \}$ such that $xy \in E$ if and only if $I_x \cap I_y \neq \emptyset$.

The biadjacency matrix of a bipartite graph is a submatrix of its adjacency matrix, which consists of rows indexed by the vertices of one partite set and columns indexed by the other.\\
A proper interval bigraph is an interval bigraph if in the bipartite interval representation no interval is properly contained in another. A unit interval bigraph is an interval bigraph if all intervals in the representations are of unit length. Sen an Sanyal \cite{sensanyal} introduced these two subclasses of interval bigraphs and also showed that these two subclasses of interval bigraphs are equivalent via their biadjacency matrices. In this connection they state the following definition.
\begin{Def}
    A $0$,$1$- matrix with $n$ non-zero rows and $m$ non-zero columns has a monotone consecutive arrangement (MCA) if and only if it has independent row and column permutations such that $1$'s appear consecutively in each row and the values $\{a_i\}$ and $\{b_i\}$ denoting the initial column and final column of the intervals of $1$'s in row $i$ satisfy $a_1\leq a_2\leq\dots \leq a_n$ and $b_1\leq b_2\leq\dots \leq b_n$.
\end{Def}
\par An equivalent definition of MCA of a $n\times m$ $(0,1)$-matrix is the following: if and only if the matrix has independent row and column permutations such that the $1$'s appear consecutively in each column and the values $\{c_j\}$ and $\{d_j\}$ denoting the initial row and final row of the intervals of $1$'s in row $j$ satisfy $c_1\leq c_2\leq\dots \leq c_m$ and $d_1\leq d_2\leq\dots \leq d_m$.

An alternative definition of monotone consecutive arrangement (MCA) of a $(0,1)$- matrix, i.e., binary matrix, is the following. A binary matrix is said to have a monotone consecutive arrangement (MCA) if there exist independent row and column permutations of the matrix exhibiting the following structure (Figure 1). The $0$s of the resulting matrix can be labeled R or C such that every entry above and right of an R is an R, and every entry below and left of C is also a C.
\begin{figure}[H]
     \centering
     \begin{tikzpicture}[line cap=round,line join=round,x=1.0cm,y=1.0cm,scale=1]
\clip(2.,-1.) rectangle (11.,7.);
\draw [line width=1.pt] (3.,6.)-- (10.,6.);
\draw [line width=1.pt] (3.,6.)-- (3.,0.);
\draw [line width=1.pt] (10.,6.)-- (10.,0.);
\draw [line width=1.pt] (3.,0.)-- (10.,0.);
\draw [line width=1. pt] (3.,6.)-- (5.,6.);
\draw [line width=1.pt] (5.,6.)-- (5.,5.);
\draw [line width=1.pt] (5.,5.)-- (6.,5.);
\draw [line width=1.pt] (6.,5.)-- (6.,4.);
\draw [line width=1.pt] (6.,4.)-- (8.,4.);
\draw [line width=1.pt] (8.,4.)-- (8.,2.);
\draw [line width=1.pt] (8.,2.)-- (9.,2.);
\draw [line width=1.pt] (9.,2.)-- (9.,1.);
\draw [line width=1.pt] (9.,1.)-- (10.,1.);
\draw [line width=1.pt] (10.,1.)-- (10.,0.);
\draw [line width=1.pt] (3.,6.)-- (3.,4.);
\draw [line width=1.pt] (3.,4.)-- (4.,4.);
\draw [line width=1.pt] (4.,4.)-- (4.,3.);
\draw [line width=1.pt] (4.,3.)-- (5.,3.);
\draw [line width=1.pt] (5.,3.)-- (5.,2.);
\draw [line width=1.pt] (5.,2.)-- (6.,2.);
\draw [line width=1.pt] (6.,2.)-- (6.,1.);
\draw [line width=1.pt] (6.,1.)-- (7.,1.);
\draw [line width=1.pt] (7.,1.)-- (7.,0.);
\draw [line width=1.pt] (7.,0.)-- (10.,0.);
\draw (3.2,5.7) node[anchor=north west] {$1$};
\draw (4.2,5.7) node[anchor=north west] {$1$};
\draw (3.2,4.7) node[anchor=north west] {$1$};
\draw (4.2,4.7) node[anchor=north west] {$1$};
\draw (5.2,4.7) node[anchor=north west] {$1$};
\draw (4.2,3.7) node[anchor=north west] {$1$};
\draw (5.2,3.7) node[anchor=north west] {$1$};
\draw (6.2,3.7) node[anchor=north west] {$1$};
\draw (7.2,3.7) node[anchor=north west] {$1$};
\draw (5.2,2.7) node[anchor=north west] {$1$};
\draw (6.2,2.7) node[anchor=north west] {$1$};
\draw (7.2,2.7) node[anchor=north west] {$1$};
\draw (6.2,1.7) node[anchor=north west] {$1$};
\draw (7.2,1.7) node[anchor=north west] {$1$};
\draw (8.2,1.7) node[anchor=north west] {$1$};
\draw (7.2,0.7) node[anchor=north west] {$1$};
\draw (8.2,0.7) node[anchor=north west] {$1$};
\draw (9.2,0.7) node[anchor=north west] {$1$};
\draw (8.2,4.7) node[anchor=north west] {$0_R$};
\draw (5.2,5.7) node[anchor=north west] {$0_R$};
\draw (6.2,5.7) node[anchor=north west] {$0_R$};
\draw (7.2,5.7) node[anchor=north west] {$0_R$};
\draw (8.2,5.7) node[anchor=north west] {$0_R$};
\draw (9.2,5.7) node[anchor=north west] {$0_R$};
\draw (6.2,4.7) node[anchor=north west] {$0_R$};
\draw (7.2,4.7) node[anchor=north west] {$0_R$};
\draw (9.2,4.7) node[anchor=north west] {$0_R$};
\draw (8.2,3.7) node[anchor=north west] {$0_R$};
\draw (9.2,3.7) node[anchor=north west] {$0_R$};
\draw (8.2,2.7) node[anchor=north west] {$0_R$};
\draw (9.2,2.7) node[anchor=north west] {$0_R$};
\draw (9.2,1.7) node[anchor=north west] {$0_R$};
\draw (3.2,3.7) node[anchor=north west] {$0_C$};
\draw (3.2,2.7) node[anchor=north west] {$0_C$};
\draw (4.2,2.7) node[anchor=north west] {$0_C$};
\draw (3.2,1.7) node[anchor=north west] {$0_C$};
\draw (4.2,1.7) node[anchor=north west] {$0_C$};
\draw (5.2,1.7) node[anchor=north west] {$0_C$};
\draw (3.2,0.7) node[anchor=north west] {$0_C$};
\draw (4.2,0.7) node[anchor=north west] {$0_C$};
\draw (5.2,0.7) node[anchor=north west] {$0_C$};
\draw (6.2,0.7) node[anchor=north west] {$0_C$};
\draw (2.2,5.7) node[anchor=north west] {$x_1$};
\draw (2.2,4.7) node[anchor=north west] {$x_2$};
\draw (2.2,3.7) node[anchor=north west] {$x_3$};
\draw (2.2,2.7) node[anchor=north west] {$x_4$};
\draw (2.2,1.7) node[anchor=north west] {$x_5$};
\draw (2.2,0.7) node[anchor=north west] {$x_6$};
\draw (3.2,6.7) node[anchor=north west] {$y_1$};
\draw (4.2,6.7) node[anchor=north west] {$y_2$};
\draw (5.2,6.7) node[anchor=north west] {$y_3$};
\draw (6.2,6.7) node[anchor=north west] {$y_4$};
\draw (7.2,6.7) node[anchor=north west] {$y_5$};
\draw (8.2,6.7) node[anchor=north west] {$y_6$};
\draw (9.2,6.7) node[anchor=north west] {$y_7$};
\end{tikzpicture}
     \caption{Monotone consecutive arrangement of a $(0,1)$ matrix.}
     \label{fig_1}
 \end{figure}

In the following theorem Sen and Sanyal \cite{sensanyal} characterized proper interval bigraphs in terms of monotone consecutive arrangement of their biadjacency matrices, also showed that the class of proper interval bigraphs and unit interval bigraphs are equivalent.
\begin{theo}[\cite{sensanyal}]
    For a bipartite graph $B$ the following statements are equivalent:
    \begin{itemize}
    \item[(i).] $B$ is an unit interval bigraph.
        \item[(ii).] $B$ is a proper interval bigraph.
        \item[(iii).] Biadjacency matrix $A(B)$ of $B$ has a MCA.
    \end{itemize}
\end{theo}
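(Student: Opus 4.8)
The plan is to prove the three implications $(i)\Rightarrow(ii)$, $(ii)\Rightarrow(iii)$ and $(iii)\Rightarrow(i)$, which together close the cycle. The first is immediate: if every interval has the same length, then $I_u\subsetneq I_v$ would force $I_v$ to be strictly longer than $I_u$, contradicting equal lengths, so no interval is properly contained in another and $B$ is a proper interval bigraph. Throughout I may assume, after a generic perturbation that alters neither the adjacencies nor the proper/unit property, that all endpoints are distinct, and I discard isolated vertices (which the MCA definition excludes anyway and which can be reinserted at the end).

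For $(ii)\Rightarrow(iii)$ the key observation is that in a proper representation the order of left endpoints agrees with that of right endpoints on each side: if $l(x)<l(x')$ while $r(x)>r(x')$ then $I_{x'}\subsetneq I_x$, which is forbidden. So I list $X=\{x_1,\dots,x_n\}$ with $l(x_1)<\cdots<l(x_n)$, whence also $r(x_1)<\cdots<r(x_n)$, and likewise $Y=\{y_1,\dots,y_m\}$ with both endpoint sequences increasing. Since two intervals meet iff the left endpoint of each is at most the right endpoint of the other, the $(i,j)$ entry of $A(B)$ is $1$ exactly when $l(x_i)\le r(y_j)$ and $l(y_j)\le r(x_i)$. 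As $r(y_j)$ increases in $j$, the first condition holds precisely for $j\ge a_i:=\min\{j:r(y_j)\ge l(x_i)\}$; as $l(y_j)$ increases in $j$, the second holds precisely for $j\le b_i:=\max\{j:l(y_j)\le r(x_i)\}$. Hence the $1$'s of row $i$ form the consecutive block $a_i\le j\le b_i$, and since $l(x_i),r(x_i)$ are increasing in $i$ the numbers $a_i,b_i$ are nondecreasing; this is exactly the MCA condition in the chosen row and column orders.

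For $(iii)\Rightarrow(i)$, which I expect to be the main obstacle, I start from a matrix in MCA form with first and last $1$'s $a_i\le b_i$ in row $i$ (with $a_1\le\cdots\le a_n$ and $b_1\le\cdots\le b_n$) and build a \emph{unit} representation. It suffices to assign centres $u_1\le\cdots\le u_n$ to the rows and $v_1\le\cdots\le v_m$ to the columns and give each vertex a closed interval of length one about its centre, so that $x_i$ meets $y_j$ iff $|u_i-v_j|\le 1$; since the $v_j$ are increasing this amounts to requiring $v_{a_i-1}<u_i-1\le v_{a_i}$ and $v_{b_i}\le u_i+1<v_{b_i+1}$ for every $i$ (dropping the vacuous constraints when $a_i=1$ or $b_i=m$). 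The monotonicity of $a_i$ and $b_i$ makes this difference system consistent, so admissible centres can be produced explicitly by sweeping rows and columns together in increasing order and placing each new centre in the gap dictated by its staircase indices. The delicate bookkeeping is verifying that the resulting unit intervals reproduce \emph{exactly} the prescribed blocks of $1$'s with no spurious intersections; it is precisely here that the two monotonicity chains, not merely the consecutiveness of the $1$'s, are indispensable. Composing $(iii)\Rightarrow(i)\Rightarrow(ii)\Rightarrow(iii)$ then yields the full equivalence.
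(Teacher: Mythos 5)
This theorem is not proved in the paper at all: it is quoted from Sen and Sanyal \cite{sensanyal} as a known characterization, so there is no in-paper argument to compare yours against. Judged on its own terms, your plan follows the standard route. The implication $(i)\Rightarrow(ii)$ is trivially correct, and your $(ii)\Rightarrow(iii)$ is complete and sound: after a perturbation making endpoints distinct, properness forces the left- and right-endpoint orders to coincide on each side, and the characterization $x_iy_j\in E\iff l(x_i)\le r(y_j)$ and $l(y_j)\le r(x_i)$ does give consecutive blocks $[a_i,b_i]$ with both $a_i$ and $b_i$ nondecreasing.

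The gap is in $(iii)\Rightarrow(i)$, which you yourself identify as ``the main obstacle'' and then do not close. You reduce the construction to finding centres with $v_{a_i-1}<u_i-1\le v_{a_i}$ and $v_{b_i}\le u_i+1<v_{b_i+1}$, and then assert that ``the monotonicity of $a_i$ and $b_i$ makes this difference system consistent.'' That assertion is the entire content of the hard direction. Eliminating the $u_i$'s, nonemptiness of the admissible window for $u_i$ is equivalent to the conditions $v_{b_i}-v_{a_i}\le 2$ and $v_{b_i+1}-v_{a_i-1}>2$ holding simultaneously for every row $i$, and one must prove that the two monotone chains rule out any conflicting cycle of such constraints (for two rows this follows because monotonicity forbids one row's block from being strictly nested in another's on both sides, but chains of length greater than two also need to be excluded) and then exhibit or induct a consistent assignment. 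As written, the ``sweep'' is a description of intent, not a construction, and the promised verification that no spurious intersections arise is explicitly deferred. Until that consistency argument is supplied, the equivalence is not established; the other two implications alone only show that unit interval bigraphs form a subclass of bigraphs whose biadjacency matrices admit an MCA.
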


\section{Main Result}
\subsection{Bipartite powers of interval bigraphs}

A bigraph $B=(X,Y,E)$ is an interval bigraph if there exists a family $\mathcal{I}=\{I_v: v\in X\cup Y\}$ of intervals such that $xy\in E$ if and only if $I_x\cap I_y\neq \emptyset$. If $B=(X,Y,E)$ is an interval bigraph, then we have an interval representation $B$ such that left end points of the intervals $I_x$, $x\in X$ are in increasing order. Also the left end points of the intervals $I_y$, $y\in Y$ are in increasing order. Bipartite powers of interval bigraphs have defined by Okamoto et al. \cite{10} in the following way. Let $k$ be an odd natural number. Then for $B^{[k]}$, they form new interval $I_k(x)=[l(x), r_k(x)]$, where $r_k(x)=$ max $\{l(y): d_G(x,y)\leq k\}$ and $I_k(y)=[l(y), r_k(y)]$, where $r_k(y)=$ max $\{l(x): d_G(x,y)\leq k\}$.\\
As previously mentioned, this definition for the intervals of $B^{[k]}$ is not well defined and required some modification. And this becomes evident when examining the following interval bigraphs. Let's consider an interval bigraph with the interval representation. 
\begin{figure}[H]
    \centering
    \begin{tikzpicture}[scale=.45, line cap=round,line join=round,,x=1.0cm,y=1.0cm]
\clip(3.,-15.) rectangle (30.,15.);
\draw [line width=.2pt] (12.,12.)-- (18.,12.);
\draw [line width=.2pt] (12.,12.)-- (10.,8.);
\draw [line width=.2pt] (18.,12.)-- (20.,8.);
\draw [line width=.2pt] (10.,8.)-- (12.,4.);
\draw [line width=.2pt] (12.,4.)-- (18.,4.);
\draw [line width=.2pt] (20.,8.)-- (18.,4.);
\draw [line width=.2pt] (10.,8.)-- (20.,8.);
\draw [line width=.2pt] (12.,4.)-- (18.,12.);
\draw [line width=.2pt] (12.,12.)-- (18.,4.);
\draw [line width=.2pt] (18.,4.)-- (20.,2.);
\draw [line width=.2pt] (12.,4.)-- (10.,2.);
\draw [line width=.2pt] (20.,8.)-- (22.,8.);
\draw [line width=.2pt] (10.,8.)-- (8.,8.);
\draw [line width=.2pt] (18.,12.)-- (20.,14.);
\draw [line width=.2pt] (12.,-4.)-- (20.,-4.);
\draw [line width=.2pt] (8.,-6.)-- (20.,-6.);
\draw [line width=.2pt] (14.,-8.)-- (22.,-8.);
\draw [line width=.2pt] (10.,-8.)-- (12.,-8.);
\draw [line width=.2pt] (16.,-10.)-- (18.,-10.);
\draw [line width=.2pt] (18.,-12.)-- (20.,-12.);
\draw [line width=.2pt] (8.,-10.)-- (14.,-10.);
\draw [line width=.2pt] (14.,-12.)-- (16.,-12.);
\draw [line width=.2pt] (20.,-14.)-- (22.,-14.);
\draw [line width=.2pt] (22.,-12.)-- (24.,-12.);
\draw [line width=.2pt] (4.,-14.)-- (8.,-14.);
\draw (15.5,-2.7) node[anchor=north west] {$x_1$};
\draw (13.5,-4.8) node[anchor=north west] {$x_2$};
\draw (17,-6.7) node[anchor=north west] {$x_3$};
\draw (10.5,-6.7) node[anchor=north west] {$x_4$};
\draw (16.5,-8.7) node[anchor=north west] {$x_5$};
\draw (18.5,-10.7) node[anchor=north west] {$x_6$};
\draw (10.5,-8.7) node[anchor=north west] {$y_1$};
\draw (14.5,-10.7) node[anchor=north west] {$y_2$};
\draw (20.5,-12.7) node[anchor=north west] {$y_3$};
\draw (5.5,-12.7) node[anchor=north west] {$y_4$};
\draw (22.5,-10.7) node[anchor=north west] {$y_5$};
\draw (3.5,-0.2) node[anchor=north west] {0};
\draw (5.5,-0.2) node[anchor=north west] {1};
\draw (7.5,-0.2) node[anchor=north west] {2};
\draw (9.5,-0.2) node[anchor=north west] {3};
\draw (11.5,-0.2) node[anchor=north west] {4};
\draw (13.5,-0.2) node[anchor=north west] {5};
\draw (15.5,-0.2) node[anchor=north west] {6};
\draw (17.5,-0.2) node[anchor=north west] {7};
\draw (19.5,-0.2) node[anchor=north west] {8};
\draw (21.5,-0.2) node[anchor=north west] {9};
\draw (23.5,-0.1) node[anchor=north west] {10};
\draw (11.5,13.6) node[anchor=north west] {$x_1$};
\draw (17.,13.6) node[anchor=north west] {$y_1$};
\draw (20.,14.35) node[anchor=north west] {$x_4$};
\draw (7.5,9.5) node[anchor=north west] {$x_6$};
\draw (9.,8.) node[anchor=north west] {$y_3$};
\draw (10,4.5) node[anchor=north west] {$x_3$};
\draw (18.2,4.5) node[anchor=north west] {$y_2$};
\draw (20.,8.) node[anchor=north west] {$x_2$};
\draw (22.550438118434776,8.681256018158766) node[anchor=north west] {$y_4$};
\draw (8.5,2.5) node[anchor=north west] {$y_5$};
\draw (20.2,2.5) node[anchor=north west] {$x_5$};
\begin{scriptsize}
\draw [fill=black] (12.,12.) circle (2.5pt);
\draw [fill=black] (18.,12.) circle (2.5pt);
\draw [fill=black] (10.,8.) circle (2.5pt);
\draw [fill=black] (20.,8.) circle (2.5pt);
\draw [fill=black] (12.,4.) circle (2.5pt);
\draw [fill=black] (18.,4.) circle (2.5pt);
\draw [fill=black] (20.,2.) circle (2.5pt);
\draw [fill=black] (10.,2.) circle (2.5pt);
\draw [fill=black] (22.,8.) circle (2.5pt);
\draw [fill=black] (8.,8.) circle (2.5pt);
\draw [fill=black] (20.,14.) circle (2.5pt);
\draw [fill=black] (4.,-2.) circle (2.pt);
\draw [fill=black] (6.,-2.) circle (2.pt);
\draw [fill=black] (8.,-2.) circle (2.pt);
\draw [fill=black] (10.,-2.) circle (2.pt);
\draw [fill=black] (12.,-2.) circle (2.pt);
\draw [fill=black] (14.,-2.) circle (2.pt);
\draw [fill=black] (16.,-2.) circle (2.pt);
\draw [fill=black] (18.,-2.) circle (2.pt);
\draw [fill=black] (20.,-2.) circle (2.pt);
\draw [fill=black] (22.,-2.) circle (2.pt);
\draw [fill=black] (24.,-2.) circle (2.pt);

\end{scriptsize}
\end{tikzpicture}
    \caption{An interval bigraph $B$ with its interval representation.}
    \label{fig2}
\end{figure}
In this example, according to the definition of Okamoto et al. in $B^{[3]}$, $I_3(y)=[8,7]$, which is not possible. Now we shall prove that if $B$ is an interval graph then $B^{[k]}$ is also an interval graph for any odd natural number $k$.
\begin{theo}\label{t3}
    For an odd natural number $k$, $I_k(x)\cap I_k(y)\neq \emptyset$ if and only if $d(x,y)\leq k$ in $B$.
\end{theo}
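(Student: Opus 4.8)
The plan is to first record the repaired definition that the statement refers to: for a vertex $v$ on either side set $I_k(v)=[l(v),r_k(v)]$, where $r_k(v)$ is the larger of $l(v)$ and $\max\{l(u): u \text{ on the side opposite to } v,\ d(u,v)\le k\}$. Flooring the right endpoint by $l(v)$ is exactly the modification that removes the $[8,7]$ pathology, so that $r_k(v)\ge l(v)$ always and every $I_k(v)$ is a genuine interval. Since the statement is an ``iff'', I would prove the two implications separately, expecting the forward implication (distance $\Rightarrow$ overlap) to be a pure definition-chase and the converse to carry all the difficulty. For the forward direction, suppose $d(x,y)\le k$ with $x\in X$, $y\in Y$; then $y$ is an opposite-side vertex within distance $k$ of $x$, so $l(y)\le r_k(x)$ by definition, and symmetrically $l(x)\le r_k(y)$. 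As $I_k(x)=[l(x),r_k(x)]$ and $I_k(y)=[l(y),r_k(y)]$ are honest intervals, the pair of inequalities $l(y)\le r_k(x)$ and $l(x)\le r_k(y)$ is precisely the condition for them to meet, so $I_k(x)\cap I_k(y)\neq\emptyset$. As a sanity check the same computation at $k=1$ reproduces the edges of $B$, so $B^{[1]}=B$, as it must.

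For the converse I would argue directly and geometrically. An overlap $I_k(x)\cap I_k(y)\neq\emptyset$ unpacks to the two inequalities $l(y)\le r_k(x)$ and $l(x)\le r_k(y)$; using the $X\leftrightarrow Y$ symmetry of the construction I may assume $l(x)\le l(y)$ and work with $l(y)\le r_k(x)$, which produces a witness $y^\ast\in Y$ with $d(x,y^\ast)\le k$ and $l(y^\ast)\ge l(y)$. The task is then to show that the nearer vertex $y$, lying to the left of the reachable witness $y^\ast$, is itself within distance $k$ of $x$. For this I would establish a reachability lemma: exploiting the hypothesis that the left endpoints of the $X$-intervals and of the $Y$-intervals are each increasing, a greedy ``jump as far right as possible'' argument shows that the set of vertices within distance $k$ of $x$ is governed by a single contiguous reach-window, extended one hop at a time, so that an opposite-side vertex is reachable within $k$ steps exactly when its interval meets that window. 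Granting this lemma, the two inequalities coming from the overlap force $I_y$ to meet the reach-window of $x$, whence $d(x,y)\le k$, which is what the converse asserts.

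I expect the main obstacle to be precisely this reachability lemma, and within it two points need care. First, the \emph{bipartite parity}: the reach-window after an odd number of hops (landing on the side opposite $x$) must be tracked separately from the window after an even number of hops, and one must check that no gap appears as the frontier advances, i.e.\ that the window stays contiguous. Second, the \emph{reconciliation} between the purely left-endpoint data used to define $I_k$ and the genuine interval overlap that governs adjacency in $B$; it is here that the monotone ordering of the left endpoints is indispensable, since it is what guarantees that ``$l(y)\le r_k(x)$ together with $l(x)\le r_k(y)$'' is equivalent to $I_y$ actually meeting $x$'s reach-window, rather than merely starting before its right end. Proving contiguity of the window, and pinning down this equivalence, is where I would concentrate the effort; the rest is bookkeeping.
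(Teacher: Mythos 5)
Your forward direction is fine and is exactly the paper's: from $d(x,y)\le k$ one reads off $l(y)\le r_k(x)$ and $l(x)\le r_k(y)$, which for two closed intervals is precisely the overlap condition. The problem is the converse, where your entire argument is routed through a ``reachability lemma'' (the vertices within distance $k$ of $x$ are governed by a contiguous reach-window, and an opposite-side vertex is within distance $k$ iff its interval meets that window) that you state, flag as the main obstacle, and then do not prove. As written the proposal therefore does not establish the hard half of the equivalence. Concretely, what is needed is: if $l(x)\le l(y)\le l(y_s)=r_k(x)$ for some witness $y_s$ with $d(x,y_s)\le k$, then $d(x,y)\le k$; nothing in the proposal actually delivers this. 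The natural way to close it is a covering argument along a shortest $x$--$y_s$ path $x=v_0,v_1,\dots,v_m=y_s$ ($m\le k$, $m$ odd): consecutive intervals intersect, so $\bigcup_i I_{v_i}$ is connected and contains $[l(x),l(y_s)]$, hence contains the point $l(y)$; one must then show that $l(y)$ is captured by an $X$-vertex of the path (or by a common $X$-neighbour of $y$ and a $Y$-vertex of the path) early enough that the detour through it to $y$ has odd length at most $k$. The subcase in which $l(y)$ is covered only by $Y$-intervals of the path, and the parity bookkeeping, are exactly where the care you anticipate is required, and they are left entirely open.

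For what it is worth, the paper's own proof of the converse is no more complete at this point: after reducing to $l(x)\le l(y)\le l(y_s)$ it simply declares ``Hence $d(x,y)\le k$''. So you have correctly located the step that carries all the content --- and your explicit repair of the definition ($r_k(v)$ floored at $l(v)$), which the paper only implies through its case analysis, is a genuine improvement in precision. But locating the gap is not the same as filling it, and the proposal cannot be accepted as a proof of the theorem until the reachability/covering lemma is actually established.
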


\begin{proof}
    Let $d(x,y)\leq k$. Already we have mentioned that we can take the  interval representation of $B$ such that left end points of $I(x_i)$, $ x_i\in X$ are in the increasing order as the increasing order of their suffixes. Similar relation holds for the left end points of $I(y_j)$, $y_j\in Y $. Then from the definitions of $r_k(x)$ and $r_k(y)$, we have $l(x)\leq r_k(y)$ and $l(y)\leq r_k(x)$. This implies $I_k(x)\cap I_k(y)\neq \emptyset$. Also, if  $r_k(x)=l(x)$ and $r_k(y)=l(y)$, then $l(x)\leq l(y)$ and $l(y)\leq l(x)$. In the other case, suppose $l(x)\leq r_k(y)=l(y)$ and $l(y)\leq r_k(x)\neq l(x)$. Thus in these cases also we have $I_k(x)\cap I_k(y)\neq\emptyset$.\\
    \par For the converse, suppose $I_k(x)\cap I_k(y)\neq\emptyset$. Thus $l(x)\leq r_k(y)$ and $l(y)\leq r_k(x)$. Now we consider the following possibilities.
    \\ \textit{\textbf{Case 1:}} Let $r_k(y)=l(y)$ and $r_k(x)=l(x)$. Then $l(x)\leq l(y)$ and $l(y)\leq l(x)$, i.e. $l(x)=l(y)$. So  $I(x)\cap I(y)\neq\emptyset$ i.e. $xy\in E$. Thus $d(x,y)\leq k$.\\
    \textit{\textbf{Case 2:}} Let $r_k(y)=$ max $\{ l(x_i): d(x_i, y)\leq k \}=l(x_r)$. and $r_k(x)=$ max $\{ l(y_j): d(x, y_j)\leq k\}=l(y_s)$ i.e. $l(x)\leq l(x_r)$ and $l(y)\leq l(y_s)$. Without loss of generality, suppose $l(x)\leq l(y)$. Now from the definition of $l(y_s)$, for any $q > s, l(y_q)> l(y_s)$. Again as  $I_k(x)\cap I_k(y)\neq\emptyset$, we have $l(x)\leq l(y)< l(y_s)$. Hence $d(x,y)\leq k$.
\end{proof}
\subsection{Bipartite powers of proper interval bigraphs}
In this section, using MCA (defined before) of  the biadjacency matrix of proper interval bigraphs, we shall prove that the class of proper interval bigraphs are closed under bipartite powers.
\begin{theo} Let $B=(X,Y,E)$ be a proper interval bigraph. Then $B^{[k]}$ is also a proper interval bigraph, where $k$ is an odd positive integer.
    
\end{theo}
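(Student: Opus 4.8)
The plan is to reduce everything to the Sen--Sanyal characterization. Since $B$ is a proper interval bigraph, its biadjacency matrix $A(B)$ admits an MCA, so we may fix orderings $x_1,\dots,x_n$ of $X$ and $y_1,\dots,y_m$ of $Y$ for which the $1$'s in row $i$ occupy a consecutive block of columns $[a_i,b_i]$ with $a_1\le\cdots\le a_n$ and $b_1\le\cdots\le b_n$ (and, by the equivalent column form, the $1$'s in column $j$ occupy rows $[c_j,d_j]$ with $c_1\le\cdots\le c_m$, $d_1\le\cdots\le d_m$). Because distances between two vertices of the same part are even while those between the two parts are odd, $B^{[k]}$ is again bipartite with parts $X,Y$, and its biadjacency matrix $A(B^{[k]})$ has a $1$ in position $(i,j)$ precisely when $d_B(x_i,y_j)\le k$. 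The whole theorem will follow once I show that, \emph{with the same orderings}, $A(B^{[k]})$ itself has an MCA; then Sen--Sanyal gives that $B^{[k]}$ is a proper interval bigraph. I will assume $B$ is connected (otherwise argue componentwise), so that every vertex has a neighbour and hence every row of $A(B^{[k]})$ is nonzero.

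The combinatorial heart is a monotonicity-of-reach lemma. For a vertex $x_i$ and $t\ge 0$ let $D_t(i)$ be the set of vertices at distance $\le t$ from $x_i$, and let $\lambda_t(i),\rho_t(i)$ be the smallest and largest indices occurring in $D_t(i)$ (within $X$ if $t$ is even, within $Y$ if $t$ is odd). I would prove by induction on $t$ that $D_t(i)=[\lambda_t(i),\rho_t(i)]$ is a full interval and that both $\lambda_t$ and $\rho_t$ are nondecreasing in $i$. The clean recursion driving this is $\rho_{t+1}(i)=b_{\rho_t(i)}$ (resp.\ $d_{\rho_t(i)}$ at the other parity) and $\lambda_{t+1}(i)=a_{\lambda_t(i)}$ (resp.\ $c_{\lambda_t(i)}$): indeed, any vertex $y_c$ at distance $\le t+1$ has a neighbour $x_r$ at distance $\le t$, so $r\le\rho_t(i)$ and hence $c\le b_r\le b_{\rho_t(i)}$ by monotonicity, while $y_{b_{\rho_t(i)}}$ is realized as a neighbour of $x_{\rho_t(i)}$. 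Since $r\mapsto b_r$, $r\mapsto a_r$, $j\mapsto d_j$, $j\mapsto c_j$ are all nondecreasing, these recursions are compositions of nondecreasing maps, so monotonicity of $\lambda_t,\rho_t$ in $i$ propagates from the base cases $\lambda_1(i)=a_i$, $\rho_1(i)=b_i$.

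The step that needs the most care---and the place I expect the main obstacle---is the \emph{consecutiveness} (that $D_t(i)$ has no internal gaps). For this I would first record a structural fact about connected MCA matrices: for consecutive rows one always has $a_{r+1}\le b_r+1$, since any column $c$ with $b_r<c<a_{r+1}$ would be covered by no row at all (rows $\le r$ stop at $b_r$, rows $\ge r+1$ start at $a_{r+1}$, by monotonicity) and hence correspond to an isolated vertex, contradicting connectedness; the symmetric statement $c_{j+1}\le d_j+1$ holds for columns. Consequently the neighbourhoods $N(x_r)=[a_r,b_r]$ over a block of consecutive rows $r\in[\lambda_t(i),\rho_t(i)]$ chain together (each overlapping or abutting the next), so their union is exactly the interval $[a_{\lambda_t(i)},b_{\rho_t(i)}]=[\lambda_{t+1}(i),\rho_{t+1}(i)]$; absorbing the already-interval set $D_{t-1}(i)$ (which is nested inside) keeps it an interval, closing the induction.

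Finally I would specialize to $t=k$ (odd), so that $D_k(i)\subseteq Y$ is precisely the set of columns $j$ with $d_B(x_i,y_j)\le k$. The induction shows this set is the interval $[\lambda_k(i),\rho_k(i)]$, i.e.\ the $1$'s in row $i$ of $A(B^{[k]})$ are consecutive, and that its endpoints $\lambda_k(i),\rho_k(i)$ are nondecreasing in $i$. These are exactly the two requirements in the definition of an MCA, so $A(B^{[k]})$ has an MCA and $B^{[k]}$ is a proper interval bigraph by the Sen--Sanyal theorem. The only bookkeeping to watch throughout is the alternation of parity (rows versus columns) as $t$ increases, together with the mild assumption of connectedness used to forbid gaps.
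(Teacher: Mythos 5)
Your proof is correct, and it shares the paper's overall strategy: reduce to the Sen--Sanyal characterization and show that the biadjacency matrix of $B^{[k]}$, with the same row and column orderings, again has an MCA. The execution, however, is genuinely different. The paper works directly with the stair structure: it takes a $1$-entry $(x_i,y_j)$ of $A(B^{[k]})$, writes a shortest $x_i$--$y_j$ path as an alternating composition of the extremal-neighbour maps $\alpha,\gamma$ (resp.\ $\beta,\delta$), and argues that the entries immediately above and to the right (resp.\ below and to the left) of a $1$ are again $1$, so the $1$'s of $A(B^{[k]})$ still fill a staircase region. You instead prove a monotonicity-of-reach lemma by induction on the distance $t$: the ball of radius $t$ around $x_i$ is a full index interval $[\lambda_t(i),\rho_t(i)]$ whose endpoints arise by iterating the nondecreasing maps $i\mapsto a_i,b_i$ and $j\mapsto c_j,d_j$ and are therefore nondecreasing in $i$. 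Both arguments rest on the same underlying fact (iterates of the extremal-neighbour maps are monotone), but your packaging is tighter in two respects: you isolate and actually prove the consecutiveness of the $1$'s in each row of $A(B^{[k]})$, via the observation that $a_{r+1}\le b_r+1$ in a connected MCA matrix so that consecutive neighbourhoods chain into a single interval --- a point the paper passes over when it asserts that ``all the entries to the right of $(x_i,y_j)$ up to the lower stair are $1$''; and you state explicitly that the new row intervals $[\lambda_k(i),\rho_k(i)]$ satisfy the endpoint monotonicity demanded by the definition of MCA. What the paper's version buys is the geometric picture (the R/C labelling of zeros and the polygonal path of its Figure 3); what yours buys is a self-contained induction in which each step is checkable, at the mild cost of the explicit connectedness assumption (which is harmless, since one can argue componentwise).
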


\begin{proof}
Since $B$ is proper interval bigraph, its biadjacency matrix $A(B)$ has monotone consecutive arrangement. We shall show that the biadjacency matrix $A(B^{[k]})$ of $B^{[k]}$ has also monotone consecutive arrangement (MCA). Let $A(B)$ be a $m\times n$ $0$, $1$- matrix and rows and columns of $A(B)$ are arranged such that it has an MCA structure. Let $a_i$, $b_i$ $(1\leq i\leq n)$ denote respectively the initial and final column of the intervals of $1$'s of the $i$-th row. Also $c_j$ and $d_j$ $(i\leq j\leq m)$ denote respectively the initial and final row of the intervals of $1$'s of the $j$-th column.\\
Now we consider four mapping   $\alpha$ : $\{1,2,...,n\}$ $\rightarrow{\{1,2,...,m\}}$, $\beta$ : $\{1,2,...,n\}$ $\rightarrow{\{1,2,...,m\}}$, $\gamma$ : $\{1,2,...,m\}$ $\rightarrow{\{1,2,...,n\}}$, $\delta$ : $\{1,2,...,m\}$ $\rightarrow{\{1,2,...,n\}}$ respectively defined by $\alpha (i)=a_i$, $\beta (i)=b_i$ $(1\leq i\leq n)$, $\gamma (j)=c_j$ and $\delta (j)=d_j$ $(1\leq j\leq m)$. Clearly all the composite mapping $\alpha\gamma$, $\gamma\alpha$, $\beta\delta$ and $\delta\beta$ are well-defined.\\
Now we shall prove that the biadjacency matrix $A(B^{[k]})$ of $B^{[k]}$ is also exhibits MCA, where $k$ is an odd natural number.\\
Let in $B^{[k]}$ the vertices $x_i$ and $y_j$ are adjacent. Thus there exists a shortest $x_i$-$y_j$ path of odd length $\leq k$ in 
$B$. Let $A(B)$ has MCA and $(x_i,y_j)$ entry below the lower stair, we shall show that all the entries to the right and above the $(x_i,y_j)$ entry are also $1$ up to the lower stair of MCA. Now, the $x_i$-$y_j$ shortest path in $B$ can be written as\\
$P$: $x_i$ $y_{\alpha(i)}$ $x_{\gamma\alpha(i)}$ $y_{\alpha\gamma\alpha(i)}$ $...$ $x_\frac{\alpha\gamma\alpha\gamma...\alpha(i)}{ k_1 times}$ $y_{j}$, where $k_1+1$ is an odd integer and $k_1+1\leq k$. Next, the $x_i$-$y_{j+1}$ shortest path is\\
$P_1$: $x_i$ $y_{\alpha(i)}$ $x_{\gamma\alpha(i)}$ $y_{\alpha\gamma\alpha(i)}$ $...$ $x_\frac{\alpha\gamma\alpha\gamma...\alpha(i)}{ k_2 times}$ $y_{j+1}$, where $k_2+1$ is an odd integer and $k_2+1\leq k_1+1$. Hence $P_1$ is a subpath of $P$.\\
 Next, the $x_{i-1}$-$y_{j}$ shortest path is\\
$P_2$: $x_{i-1}$ $y_{\alpha(i-1)}$ $x_{\gamma\alpha(i-1)}$ $y_{\alpha\gamma\alpha(i-1)}$ $...$ $x_\frac{\alpha\gamma\alpha\gamma...\alpha(i-1)}{ k_3 times}$ $y_{j}$, where $k_3+1$ is an odd integer and $k_3+1\leq k$.\\
Hence in $A(B^{[k]})$ the biadjacency matrix of $B^{[k]}$, if the entry $(x_i,y_j)$ is a $1$ then the entries $(x_i,y_{j+1})$ and $(x_{i-1},y_j)$ are also $1$.
\begin{figure}[H]
    \centering
    \begin{tikzpicture}[line cap=round,line join=round,x=1.0cm,y=1.0cm]
\clip(3.,0.) rectangle (17.,11.);
\draw [line width=1.pt] (4.,10.)-- (4.,1.);
\draw [line width=1.pt] (4.,10.)-- (16.,10.);
\draw [line width=1.pt] (4.,1.)-- (16.,1.);
\draw [line width=1.pt] (16.,10.)-- (16.,1.);
\draw [line width=1.pt] (4.,10.)-- (8.,10.);
\draw [line width=1.pt] (8.,10.)-- (8.,9.);
\draw [line width=1.pt] (8.,9.)-- (10.,9.);
\draw [line width=1.pt] (10.,9.)-- (10.,7.);
\draw [line width=1.pt] (13.,5.)-- (14.,5.);
\draw [line width=1.pt] (14.,5.)-- (14.,3.);
\draw [line width=1.pt] (14.,3.)-- (16.,3.);
\draw [line width=1.pt] (16.,3.)-- (16.,1.);
\draw [line width=1.pt] (4.,8.)-- (7.,8.);
\draw [line width=1.pt] (11.,6.)-- (11.,4.);
\draw [line width=1.pt] (11.,4.)-- (12.,4.);
\draw [line width=1.pt] (12.,4.)-- (12.,2.);
\draw [line width=1.pt] (12.,2.)-- (14.,2.);
\draw [line width=1.pt] (14.,2.)-- (14.,1.);
\draw [line width=1.pt] (14.,1.)-- (16.,1.);
\draw (3.,4) node[anchor=north west] {$x_i$};
\draw (3.,5) node[anchor=north west] {$x_{i-1}$};
\draw (3.,6) node[anchor=north west] {$x_{i-2}$};
\draw (3,3.) node[anchor=north west,scale=2] {$.$};
\draw (3,3.4) node[anchor=north west,scale=2] {$.$};
\draw (3,3.2) node[anchor=north west,scale=2] {$.$};

\draw (3.,10) node[anchor=north west] {$x_1$};
\draw (3.,9) node[anchor=north west] {$x_2$};
\draw (3,8) node[anchor=north west] {$x_3$};
\draw (3,7.) node[anchor=north west,scale=2] {$.$};
\draw (3,7.4) node[anchor=north west,scale=2] {$.$};
\draw (3,7.2) node[anchor=north west,scale=2] {$.$};

\draw (4.,11) node[anchor=north west] {$y_1$};
\draw (5.,11) node[anchor=north west] {$y_2$};
\draw (6.,11) node[anchor=north west] {$y_3$};
\draw (6.2,11) node[anchor=north west,scale=2] {$...$};

\draw (8.1,11) node[anchor=north west] {$y_j$};
\draw (7.1,11) node[anchor=north west] {$y_{j-1}$};

\draw (9.,11) node[anchor=north west] {$y_{j+1}$};
\draw (10.,11) node[anchor=north west] {$y_{j+2}$};
\draw (11.,11) node[anchor=north west] {$y_{j+3}$};
\draw (15.,11) node[anchor=north west] {$y_m$};
\draw (3.,1.5) node[anchor=north west] {$x_n$};
\draw [line width=1.pt] (10.,7.)-- (12.,7.);
\draw [line width=1.pt] (12.,7.)-- (12.,6.);
\draw [line width=1.pt] (12.,6.)-- (13.,6.);
\draw [line width=1.pt] (13.,6.)-- (13.,5.);
\draw [line width=1.pt] (7.,8.)-- (7.,7.);
\draw [line width=1.pt] (7.,7.)-- (9.,7.);
\draw [line width=1.pt] (9.,7.)-- (9.,6.);
\draw [line width=1.pt] (9.,6.)-- (11.,6.);
\draw [->,line width=.2pt,color=red] (3.8,3.62) -- (12.437647222612357,3.62);
\draw [->,line width=.2pt,color=red] (12.437647222612357,3.6202835631081896) -- (12.423417467073062,5.413232761059219);
\draw [->,line width=.2pt,color=red] (12.423417467073062,5.413232761059219) -- (11.413104823783195,5.399003005519925);
\draw [->,line width=.2pt,color=red] (11.413104823783195,5.399003005519925) -- (11.42733457932249,6.622761981899199);
\draw [->,line width=.2pt,color=red] (11.42733457932249,6.622761981899199) -- (9.378249781664168,6.594302470820612);
\draw [->,line width=.2pt,color=red] (9.378249781664168,6.594302470820612) -- (9.39107282422729,8.571261380902087);
\draw [->,line width=.2pt,color=red] (9.39107282422729,8.571261380902087) -- (8.339462719932767,8.571261380902087);
\draw (4,10) node[anchor=north west] {$1$};
\draw (5,10) node[anchor=north west] {$1$};
\draw (6,10) node[anchor=north west] {$1$};
\draw (6.3,10) node[anchor=north west,scale=2] {$...$};
\draw (7.2,10) node[anchor=north west] {$1$};
\draw (8.2,9) node[anchor=north west] {$1$};

\draw (8.2,10) node[anchor=north west] {$0_R$};
\draw (9.2,10) node[anchor=north west] {$0_R$};
\draw (10.2,10) node[anchor=north west] {$0_R$};
\draw (11.2,10) node[anchor=north west] {$0_R$};
\draw (15,10) node[anchor=north west] {$0_R$};
\draw (15,9) node[anchor=north west] {$0_R$};
\draw (15,8) node[anchor=north west] {$0_R$};
\draw (14,6) node[anchor=north west,scale=2] {$...$};

\draw (13,10) node[anchor=north west,scale=2] {$...$};
\draw (13,11) node[anchor=north west,scale=2] {$...$};
\draw (9.2,9) node[anchor=north west] {$1$};
\draw (7.2,8) node[anchor=north west] {$1$};
\draw (8.2,8) node[anchor=north west] {$1$};
\draw (9.2,8) node[anchor=north west] {$1$};
\draw (6.1,8) node[anchor=north west] {$0_C$};
\draw (5.1,8) node[anchor=north west] {$0_C$};
\draw (4.1,8) node[anchor=north west] {$0_C$};
\draw (10.2,8) node[anchor=north west] {$0_R$};
\draw (11.2,8) node[anchor=north west] {$0_R$};
\draw (10.2,9) node[anchor=north west] {$0_R$};
\draw (11.2,9) node[anchor=north west] {$0_R$};
\draw (13.,8) node[anchor=north west,scale=2] {$...$};
\draw (13.,9) node[anchor=north west,scale=2] {$...$};

\draw (4,9) node[anchor=north west] {$1$};
\draw (5,9) node[anchor=north west] {$1$};
\draw (6,9) node[anchor=north west] {$1$};
\draw (7,9) node[anchor=north west,scale=2] {$...$};
\draw (4.,4) node[anchor=north west] {$0_C$};
\draw (5.,4) node[anchor=north west] {$0_C$};
\draw (6.,4) node[anchor=north west] {$0_C$};
\draw (6.3,4) node[anchor=north west,scale=2] {$...$};
\draw (7.3,4) node[anchor=north west] {$0_C$};
\draw (8.2,4) node[anchor=north west,color=red] {$0_C$};

\draw (8.2,3) node[anchor=north west,scale=2] {$...$};

\draw (9.2,4) node[anchor=north west] {$0_C$};
\draw (10.2,4) node[anchor=north west] {$0_C$};
\draw (11.2,4) node[anchor=north west] {$0_C$};
\draw (12.2,4) node[anchor=north west,scale=1.] {$1$};
\draw (12.5,4) node[anchor=north west,scale=2.] {$...$};
\draw (13.5,4) node[anchor=north west,scale=1.] {$1$};
\draw (11.2,5) node[anchor=north west] {$1$};
\draw (12.2,5) node[anchor=north west] {$1$};
\draw (12.5,5) node[anchor=north west,scale=2.] {$...$};
\draw (13.5,5) node[anchor=north west] {$1$};
\draw (11.2,6) node[anchor=north west] {$1$};
\draw (12.2,6) node[anchor=north west] {$1$};
\draw (12.35,5.9) node[anchor=north west,scale=1.] {$...$};
\draw (12.6,6) node[anchor=north west] {$1$};

\draw (4.,5) node[anchor=north west] {$0_C$};
\draw (5.,5) node[anchor=north west] {$0_C$};
\draw (6,5) node[anchor=north west] {$0_C$};
\draw (6.3,5) node[anchor=north west,scale=2] {$...$};
\draw (7.3,5) node[anchor=north west] {$0_C$};
\draw (8.2,5) node[anchor=north west] {$0_C$};
\draw (9.2,5) node[anchor=north west] {$0_C$};
\draw (10.2,5) node[anchor=north west] {$0_C$};
\draw (10.2,6) node[anchor=north west] {$0_C$};
\draw (9.2,6) node[anchor=north west] {$0_C$};
\draw (8.2,6) node[anchor=north west] {$0_C$};
\draw (7.3,6) node[anchor=north west] {$0_C$};
\draw (6.,6) node[anchor=north west] {$0_C$};
\draw (5,6) node[anchor=north west] {$0_C$};
\draw (4,6) node[anchor=north west] {$0_C$};
\draw (6.3,6) node[anchor=north west,scale=2] {$...$};
\draw (6.3,7) node[anchor=north west,scale=2] {$...$};

\draw (4.,1.6) node[anchor=north west] {$0_C$};
\draw (5.,1.6) node[anchor=north west] {$0_C$};
\draw (6.,1.6) node[anchor=north west] {$0_C$};
\draw (6.3,1.6) node[anchor=north west,scale=2] {$...$};
\draw (7.2,1.6) node[anchor=north west] {$0_C$};
\draw (8.2,1.6) node[anchor=north west] {$0_C$};
\draw (9.2,1.6) node[anchor=north west] {$0_C$};
\draw (10.2,1.6) node[anchor=north west] {$0_C$};
\draw (11.2,1.6) node[anchor=north west] {$0_C$};
\draw (12,1.6) node[anchor=north west,scale=2] {$...$};
\draw (13.3,1.6) node[anchor=north west] {$0_C$};
\draw (14,1.6) node[anchor=north west] {$1$};
\draw (14.2,1.6) node[anchor=north west,scale=2] {$...$};
\draw (15.3,1.6) node[anchor=north west] {$1$};

\end{tikzpicture}
    \caption{MCA structure of $A(B)$ and a polygonal path from $x_i$ to $y_j$.}
    \label{fig3}
\end{figure}
Similarly, we can show that all the entries to the right of $(x_i,y_j)$ up to the lower stair are $1$. And all the entries above $(x_i,y_j)$ up to the lower stair are $1$. (In the figure 3, we have $P: x_iy_{j+4}x_{i-2}y_{j+3}x_{i-3}y_{j+1}x_2y_j$, $P_1: x_iy_{j+4}x_{i-2}y_{j+3}x_{i-3}y_{j+1}$, $P_2: x_{i-1}y_{j+3}x_{i-3}y_{j+1}x_2y_j$). \\
Next, let $(x_p,y_q)$ be an entry in $A(B)$ which is zero and belongs above the upper stair of MCA. But the entry $(x_p,y_q)$ becomes $1$ in the matrix $A(B^{[k]})$. So there exists a $x_p$-$y_q$ path $P'$ of odd length $\leq k$ in $B$. \\
$P'$: $x_p$ $y_{\beta(p)}$ $x_{\delta\beta(p)}$ $y_{\beta\delta\beta(p)}$ $...$ $x_\frac{\beta\delta\beta\delta...\beta(p)}{ k'_1 times}$ $y_{q}$, where $k'_1+1\leq k$  and  $k'_1+1$ is an odd natural number.\\
Now as before we can show that all the entries to the left and below of $(x_p,y_q)$ up to the upper stair in $A(B^{[k]})$ are ones. Therefore the biadjacency matrix $A(B^{[k]})$ also exibits MCA. Hence $B^{[k]}$, where $k$ is odd natural number, is also a proper interval bigraph.

\end{proof}

\subsection{Powers of chordal bipartite graphs}
A bipartite graph $B$ in which every cycle of length strictly greater than 4 has a chord ( an edge between two non consecutive vertices) is a \textit{chordal bipartite graph}.\\
We have already mentioned that Chandran and Mathew \cite{1} proved that if $G$ is chordal bipartite then so is $G^{[k]}$, for any odd $k\geq 3$. Here we shall prove that chordal bipartite graphs are also strongly closed under powers, which is the main result of this paper.
\begin{theo}
\textit{ Let $G$ be a bipartite graph and if for any odd positive integer $k$, $G^{[k]}$ is chordal bipartite then $G^{[k+2]}$ is also chordal bipartite}.  
\end{theo}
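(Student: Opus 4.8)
The plan is to work with the standard characterisation that a bipartite graph is chordal bipartite precisely when it has no induced cycle of length at least $6$, and to argue by contradiction. So I assume $G^{[k]}$ is chordal bipartite while $G^{[k+2]}$ is not, and I fix an induced cycle $C\colon v_1v_2\cdots v_{2t}$ (indices read modulo $2t$) of $G^{[k+2]}$ with $2t\ge 6$. Because $k$ is odd, the edges of $G^{[k+2]}$ split cleanly: an edge $uv$ is \emph{old} when $d_G(u,v)\in\{1,3,\dots,k\}$ (so $uv$ is already an edge of $G^{[k]}$) and \emph{new} when $d_G(u,v)=k+2$. If $C$ uses no new edge then every edge of $C$ lies in $G^{[k]}$, and since $G^{[k]}\subseteq G^{[k+2]}$ no chord can appear, so $C$ is an induced cycle of length $\ge 6$ in $G^{[k]}$ — contradicting chordal bipartiteness of $G^{[k]}$. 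Hence $C$ must contain a new edge, and I would choose the counterexample $C$ so as to minimise the number of new edges it uses (breaking ties by length). A useful preliminary observation, coming directly from $C$ being induced, is that any two vertices of $C$ at odd cyclic distance $\ge 3$ are non-adjacent in $G^{[k+2]}$ and of opposite colour, so $d_G(v_i,v_{i+3})\ge k+4$; this is the quantitative fact I expect to drive the chord control below.

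The heart of the argument is to remove the new edges. Say $v_1v_2$ is new, so $d_G(v_1,v_2)=k+2$, and fix a geodesic $v_1=u_0,u_1,\dots,u_{k+2}=v_2$ in $G$. Since the endpoints $v_1,v_2$ lie in opposite colour classes, no single vertex can be adjacent to both in the bipartite power, so any replacement path must have odd length $\ge 3$; the natural choice is the three old edges $v_1u_1$, $u_1u_{k+1}$, $u_{k+1}v_2$, of $G$-lengths $1,k,1$. Substituting this detour for $v_1v_2$ turns $C$ into a cycle
\[
C'\colon\; v_1,\,u_1,\,u_{k+1},\,v_2,\,v_3,\dots,v_{2t},
\]
and I would carry out the substitution for \emph{every} new edge at once, producing a closed walk all of whose edges are old, i.e.\ a cyclic sequence in $G^{[k]}$. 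In $G^{[k]}$ the former new edges are genuine non-edges, so among the original vertices $v_1,\dots,v_{2t}$ no extra chord appears; the only adjacencies that can spoil inducedness are those from an inserted vertex (a $u_1$ or a $u_{k+1}$) to a vertex of the cycle. Note also that $d_G(u_1,v_2)=d_G(u_{k+1},v_1)=k+1$ is even, so each inserted vertex is non-adjacent to the far endpoint of its own edge.

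From here I would extract a genuine induced cycle of length $\ge 6$ in $G^{[k]}$, which is the desired contradiction. The extraction is a shortcutting argument: along each surviving induced arc of $C$, if an inserted vertex $w$ has a chord to some interior vertex, replace the detour by the chord reaching deepest into the cycle (the neighbour of $w$ of extreme index) and discard the skipped portion; by the extremal choice the resulting shorter cycle is chordless on the side of $w$, and its remaining potential chords are eliminated in the same way on the companion inserted vertex. Each such step stays inside $G^{[k]}$ and strictly shortens the cycle, so the process terminates at an induced cycle of $G^{[k]}$; one then checks, using the colour classes (which fix, for each inserted vertex, the single colour class of $C$ it can meet) together with the inequality $d_G(v_i,v_{i+3})\ge k+4$, that this induced cycle cannot shrink below length $6$.

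The step I expect to be the genuine obstacle is exactly this last one: guaranteeing that the shortcutting does not collapse everything to $4$-cycles. The inserted geodesic vertices $u_1,u_{k+1}$ can a priori be adjacent in $G^{[k]}$ to several cycle vertices of the appropriate colour, and one must rule out the degenerate configuration in which a detour and two consecutive chords close up into a $4$-cycle at every attempt. Controlling this requires combining the colour bookkeeping with the distance lower bound $d_G(v_i,v_{i+3})\ge k+4$ to bound how far around $C$ an inserted vertex can see, and the residual small cases (when a shortcut lands on the second-to-last vertex of an arc, or when an inserted vertex coincides with a vertex of $C$) would be handled by shortcutting along the opposite arc through the companion inserted vertex. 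Once these are dispatched, every branch yields an induced cycle of length $\ge 6$ inside $G^{[k]}$, contradicting the hypothesis and proving that $G^{[k+2]}$ is chordal bipartite.
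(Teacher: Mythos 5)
Your construction is essentially the one the paper uses: classify the edges of a long induced cycle $C$ of $G^{[k+2]}$ into those already present in $G^{[k]}$ and those realised by a geodesic of length exactly $k+2$, and replace each edge of the latter kind by a three-edge detour inside $G^{[k]}$ (your splitting $1,k,1$ versus the paper's $k,1,1$ is immaterial). Your bookkeeping is in fact cleaner than the paper's, since you do not need its separate cases distinguishing edges of $G^{[k-2]}$ from edges of $G^{[k]}\setminus G^{[k-2]}$: every old edge is simply retained. The preliminary observations are all correct: the parity argument forcing an odd detour of length at least $3$, the fact that $d_G(u_1,v_2)=d_G(u_{k+1},v_1)=k+1$ is even, and the bound $d_G(v_i,v_{i+3})\ge k+4$ for vertices at odd cyclic distance $3$ on the induced cycle.

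However, there is a genuine gap, and you have located it yourself: the substituted closed walk need not be an induced cycle of $G^{[k]}$, because an inserted geodesic vertex $u_1$ or $u_{k+1}$ may be adjacent in $G^{[k]}$ to other cycle vertices of the opposite colour class, or to inserted vertices belonging to the detours of other new edges. Your proposal describes a shortcutting procedure to repair this but never establishes the one fact that makes the contradiction go through, namely that the procedure terminates in an induced cycle of length at least $6$ rather than collapsing to a $4$-cycle; you explicitly flag this as ``the genuine obstacle'' and leave it open. In particular, chords between inserted vertices of two different detours (say $u_1$ from the detour of $v_1v_2$ and $w_{k+1}$ from the detour of $v_3v_4$) are not controlled by the inequality $d_G(v_i,v_{i+3})\ge k+4$ alone, and no argument is given that rules out the degenerate configurations you mention. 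Without this step the proof is incomplete. For what it is worth, the paper's own proof dismisses exactly the same point with the unsupported assertion that the constructed cycle of length $6n$ ``does not have any chord of $G^{[k]}$,'' so you have correctly identified the crux of the theorem; but identifying it is not the same as resolving it, and as written your argument does not yield the conclusion.
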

\begin{proof}
Our approach is as follows. We assume that $G^{[k+2]}$ is not chordal bipartite and we shall show that $G^{[k]}$ is also not chordal bipartite. Since $G^{[k+2]}$ is not chordal bipartite, it contains a cycle $C_{2n}$, $ n\geq 3$.
 We consider the following cases.\\
 \textbf{Case 1.} All the edges of $C_{2n}$ belong to $G^{[k+2]}$ but not to $G^{[k]}$. Assume $P_1$, $P_2$, $P_3$,..., $P_{2n}$ are respectively $x_1$-$y_1$, $y_1$-$x_2$, $x_2$-$y_2$,..., $y_n$-$x_1$ shortest path of length $k+2$, where $P_1: x_1y_1'\dots y'_{\alpha}x'_{\alpha}y_1$, $P_2: y_1x''_1\dots x''_{\alpha}y''_{\alpha}x_2$, $P_3: x_2y'''_1\dots y'''_{\alpha}x'''_{\alpha}y_2$,\dots, $P_{2n}: y_nx^{2n}_n\dots x^{2n}_{\alpha}y^{2n}_{\alpha}x_1$. Now in $G^{[k]}$ we have an edge $x_1y'_{\alpha}$ of $G^{[k]}$ followed by two edges $y'_{\alpha}x'_{\alpha}$ and $x'_{\alpha}y_1$ of $G$, similarly an edge $y_1x''_{\alpha}$ of $G^{[k]}$ followed by two edges $x''_{\alpha}y''_{\alpha}$ and $y''_{\alpha}x_2$ and so on. Thus we can construct a cycle $C$ in $G^{[k]}$, where $C: x_1y'_{\alpha}x'_{\alpha}y_1x''_{\alpha}y''_{\alpha}x_2y'''_{\alpha}x'''_{\alpha}y_2\dots x_ny^{(n-1)}_{\alpha}x^{(n-1)}_{\alpha}y_nx^{n}_{\alpha}y^{n}_{\alpha}x_1$ of length $3\times 2n=6n$. Also it is easy to verify that this cycle $C$ does not have any chord of $G^{[k]}$.
\begin{figure}[H]
    \centering
    \begin{tikzpicture}[line cap=round,line join=round,x=1.0cm,y=1.0cm]
\clip(2,0) rectangle (12,10);
\draw [shift={(2.66,6.35)}] plot[domain=-1.06:0.45,variable=\t]({1*1.51*cos(\t r)+0*1.51*sin(\t r)},{0*1.51*cos(\t r)+1*1.51*sin(\t r)});
\draw (4.46,7.56)-- (5,8);
\draw (4,7)-- (4.46,7.56);
\draw [shift={(5.54,9.64)}] plot[domain=4.39:5.65,variable=\t]({1*1.73*cos(\t r)+0*1.73*sin(\t r)},{0*1.73*cos(\t r)+1*1.73*sin(\t r)});
\draw (6.95,8.61)-- (7.51,8.57);
\draw (7.51,8.57)-- (8,8.46);
\draw [shift={(10.32,9.25)}] plot[domain=3.47:4.64,variable=\t]({1*2.45*cos(\t r)+0*2.45*sin(\t r)},{0*2.45*cos(\t r)+1*2.45*sin(\t r)});
\draw (10.14,6.78)-- (10.36,6.3);
\draw (10.36,6.3)-- (10.53,5.74);
\draw [shift={(12.34,4.23)}] plot[domain=2.45:3.55,variable=\t]({1*2.36*cos(\t r)+0*2.36*sin(\t r)},{0*2.36*cos(\t r)+1*2.36*sin(\t r)});
\draw (10.18,3.29)-- (9.8,2.72);
\draw (9.8,2.72)-- (9,2);
\begin{scriptsize}
\fill [color=black] (5,8) circle (1.5pt);
\fill [color=black] (3.39,5.03) circle (1.5pt);
\draw[color=black] (3.1,5.2) node {$y_n$};
\fill[color=black] (4,7) circle (1.5pt);
\draw[color=black] (4.1,7.5) node {$y_{\alpha}^n$};
\fill [color=black](3.45,5.63) circle (1.5pt);
\fill[color=black] (3.54,6.02) circle (1.5pt);
\fill [color=black] (3.72,6.51) circle (1.5pt);
\draw[color=black] (3.6,7) node {$x_{\alpha}^n$};
\fill[color=black] (4.46,7.56) circle (1.5pt);
\draw[color=black] (4.7,8.2) node {$x_1$};
\fill [color=black] (5.62,8.33) circle (1.5pt);
\fill[color=black] (6.4,8.56) circle (1.5pt);
\draw[color=black] (6.8,8.9) node {$y_{\alpha}'$};
\fill [color=black] (5.98,8.46) circle (1.5pt);
\fill[color=black] (6.95,8.61) circle (1.5pt);
\fill [color=black] (7.51,8.57) circle (1.5pt);
\draw[color=black] (7.6,8.8) node {$x_{\alpha}'$};
\fill[color=black] (8,8.46) circle (1.5pt);
\draw[color=black] (8.22,8.7) node {$y_1$};
\fill[color=black] (8.46,8.29) circle (1.5pt);
\fill [color=black] (9,8) circle (1.5pt);
\fill [color=black] (9.48,7.62) circle (1.5pt);
\fill [color=black] (10.12,6.8) circle (1.5pt);
\draw[color=black] (10.3,7.) node {$x_{\alpha}''$};
\fill [color=black] (10.14,6.78) circle (1.5pt);
\fill [color=black] (10.36,6.3) circle (1.5pt);
\draw[color=black] (10.6,6.5) node {$y_{\alpha}''$};
\fill [color=black] (10.53,5.74) circle (1.5pt);
\draw[color=black] (10.8,5.8) node {$x_2$};
\fill [color=black] (10.18,3.29) circle (1.5pt);
\fill[color=black] (10.47,4.01) circle (1.5pt);
\fill [color=black] (10.59,4.62) circle (1.5pt);
\fill [color=black] (10.61,5.04) circle (1.5pt);
\fill [color=black] (9.8,2.72) circle (1.5pt);
\draw[color=black] (10.5,3.3) node {$y_{\alpha}'''$};
\fill [color=black] (9,2) circle (1.5pt);
\draw[color=black] (10,2.6) node {$x_{\alpha}'''$};
\fill[color=black] (3.45,4.38) circle (1.5pt);
\fill [color=black] (3.58,3.86) circle (1.5pt);
\fill[color=black] (3.75,3.43) circle (1.5pt);
\fill[color=black] (3.97,3.04) circle (1.5pt);
\fill[color=black] (8.53,1.74) circle (1.5pt);
\draw[color=black] (9.3,2.) node {$y_2$};
\fill [color=black] (8.04,1.55) circle (1.5pt);
\fill [color=black] (7.49,1.43) circle (1.5pt);
\end{scriptsize}
\end{tikzpicture}
    \caption{Formation of $C_{6n}$ in $G^{[k]}$ from $C_{2n}$ in $G^{[k+2]}$.}
    \label{fig4}
\end{figure}

\noindent \textbf{Case 2.} Let in $C_{2n}$, $m$ number of edges belong to $G^{[k]}$ but not to $G^{[k-2]}$ and remaining $2n-m$ edges belong to $G^{[k+2]}$ but not to $G^{[k]}$. Now, first we consider the possibility that $m$ edges belong to $G^{[k]}$ but not to $G^{[k-2]}$. Without loss of generality assume $y_1x_2$, $x_2y_2$, $x_4y_4$, \dots
, $y_nx_1$ are the $m$ edges and the remaining $2n-m$ edges of $G^{[k+2]}$ are $x_1y_1$, $y_2x_3$, \dots, $x_ny_n$. Next assume $P_1:x_1y_1'\dots y'_{\alpha}x'_{\alpha}y_1$, $P_2: y_2x''_2\dots x''_{\alpha}y''_{\alpha}x_3$, $\dots$, $P_{2n-m}: x_ny_n^{2n-m}\dots y_{\alpha}^{2n-m}x_{\alpha}^{2n-m}y_n$ are the $2n-m$ paths of length $k+2$. Thus in $G^{[k]}$ we have cycle $C$ of length $3(2n-m)+m=6n-2m=2(3n-m)$. As before we can observe that $C_{2(3n-m)}$ is chordless in $G^{[k]}$.\\
\noindent \textbf{Case 3.} Let in $C_{2n}$, we have $k_1$ edges belong to $G^{[k+2]}$ but not to $G^{[k]}$, $k_2$ edges belong to $G^{[k]}$ but not to $G^{[k-2]}$ and $k_3$ edges belong to $G^{[k-2]}$. Therefore $2n=k_1+k_2+k_3$. Now, $k_1(k+2)+k_2k+k_3(k-2)=k(k_1+k_2+k_3)+2(k_1-k_3)=2nk+2(k_1-k_3)$.\\
First, assume that $2(k_1-k_3)$ is positive even number. Then we may suppose that it is the sum of two odd positive integers of which both are less than $k$ or one is $k$ and other is less than $k$. Thus we have a chordless cycle $C_{2n+2}$ in $G^{[k]}$.\\
Finally, if $2(k_1-k_2)$ is an even negative integer. Then $2nk+2(k_1-k_2)=(2n-1)k+k+2(k_1-k_2)$. Now $k+2(k_1-k_3)$ is odd positive number less than $k$. Thus we have a chordless cycle $C_{2n}$ in $G^{[k]}$. This completes the proof of the theorem.
\end{proof}
\section{Conclusion}
Chandran, Mathew and others \cite{1,Chandran-Ram,Dourisboure} introduced the notion of $k$-chordal graphs. A graph $G$ is $k$-\textit{chordal} if $G$ contains no chordless cycle with more than $k$ vertices. If a bipartite graph $G$ is 4-chordal then it is chordal bipartite. Chandran and Mathew \cite{1} have proved that if a bipartite graph $G$ is $k$-chordal then so is $G^{[m]}$, for any odd natural number $m$. In this context they raise the question: weather if $G^{[m]}$ is $k$-chordal, then $G^{[m+2]}$ is also $k$-chordal ? In this paper we answer this question in affermative for $k=4$ i.e. when $G^{[m]}$ is chordal bipartite graph. Also, in the similar way as the proof of Theorem 5 we can deal with the case where $G^{[m]}$ is $k$- chordal graph  ($k>4$) and $G$ is a bipartite graph.


\begin{thebibliography}{99}
\bibitem{bran-drag-xian-yan}Andreas Brandst$\ddot{a}$dt, Feodor F. Dragan, Yang Xiang \& Chenyu Yan, Generalized Powers of Graphs and Their Algorithmic Use, \emph{ Lecture Notes in Computer Science}, 4059 423–434 (2006).
\bibitem{band-hen-nic}Hans-J$\ddot{u}$rgen Bandelt, Anja Henkmann\& Falk Nicolai, Powers of distance-hereditary graph, \emph{Discrete Mathematics} 145 (1995) 37-60. 
\bibitem{2} L. S. Chandran, M. Francis, and R. Mathew. Chordal bipartite graphs with high boxicity. \emph{Graphs Combin.}, 27:353-362 (2011).
\bibitem{1} L. S. Chandran and R. Mathew. Bipartite powers of $k$-chordal graphs, 2012. Preprint available at http://arxiv.org/abs/1108.0277.
\bibitem{Chandran-Ram}L. Sunil Chandran and L. Shankar Ram. On the number of minimum cuts in a graph. In Proceedings
of the 8th International computing and combinatorics conference, \emph{LNCS} 2387 220–230 (2002).
                                                
\bibitem{16} A.K.Das and I. Paul, On powers of circular arc graphs, 2023. Preprint available at https://doi.org/10.48550/arXiv.2210.08782.

\bibitem{Dourisboure}Yon Dourisboure. Compact routing schemes for generalised chordal graphs. \emph{Journal of Graph
Algorithms and Applications}, 9:277–297 (2005).
\bibitem{flotow} Carsten Flotow, On powers of circular arc graphs and proper circular arc graphs, \emph{Discrete Applied Mathematics}, 69 (1996) 199-207.

\bibitem{mot-su} Rajeev Motwani and  Madhu Sudan, Computing roots of graphs is hard, \emph{Discrete Applied Mathematics}, 54 (1994) 81-88.

\bibitem{10} Yoshio Okamoto, Yota Otachi, and Ryuhei Uehara, On bipartite powers of bigraphs, \emph{Discrete Mathematics and Theoretical Computer Science}, 14:2 (2012) 11–20.
\bibitem{Raychaudhuri-1} A. Raychaudhuri, On powers of interval and unit interval graphs, \emph{Congr. Numer}. 59 (1987) 235-242.
\bibitem{Raychaudhuri-2} A. Raychaudhuri, On powers of strongly chordal and circular arc graphs, \emph{Ars Combin}. 34 (1992)
147-160. 


\bibitem{g.s}G. Steiner, The recognition of indifference digraphs and generalized semiorders, \emph{J. Graph
Theory} 21(1996); 235-241.
\bibitem{sensanyal} M. Sen, B.K. Sanyal, Indifference digraphs: A generalization of indifference graphs and semiorders, \emph{SIAM J. Discrete Math.,} 7 (1994), 157-165.



\end{thebibliography}
\end{document}